
\documentclass[twoside,12pt]{article}
\usepackage{amsmath,amsthm,amssymb,amscd}
\usepackage[matrix,arrow,curve]{xy}
\usepackage[pdftex]{graphicx}

\topmargin=-10mm

\textheight=205mm

\unitlength=1mm
\title{$\zeta(n)$ via hyperbolic functions.}

\author{Joseph T. D'Avanzo and Nikolai A. Krylov}

\date {}

\newtheorem*{theorem}{Theorem}
\newtheorem*{corollary}{Corollary}
\newtheorem{lemma}{Lemma}

\newtheorem{dfn}{Definition}

\oddsidemargin=10mm

\evensidemargin=10mm

\textwidth=145mm

\def\natu       {\mathbb N}

\def\real       {\mathbb R}

\def\ArcCosh    {\rm arcosh}
\def\Cosh       {\rm cosh}
\def\ArcSinh    {\rm arcsinh}
\def\Sinh       {\rm sinh}
\def\Tanh       {\rm tanh}
\def\Coth       {\rm coth}
\def\ArcTanh    {\rm arctanh}
\def\ArcSin     {\rm arcsin}

\begin{document}

\maketitle

\parskip=5mm

\begin{abstract}
{We present here an approach to a computation of $\zeta(2)$ by changing variables in the double integral
using hyperbolic trig functions. We also apply this approach to present $\zeta(n)$, when $n>2$, as a definite
improper integral of single variable.}
\end{abstract}

\noindent {\bf Keywords}:\\ Multiple integrals; Riemann's zeta function \\
{\bf 2000 Mathematics Subject Classification}: 26B15, 11M06.

\section{Introduction}

The Riemann zeta function is defined as the series
$$
\zeta(n) = \frac{1}{1^n} + \frac{1}{2^n} +\frac{1}{3^n} + \ldots + \frac{1}{k^n} + \ldots
$$
for any integer $n\geq 2$.  Three centuries ago Euler found that $\zeta(2) = \pi^2/6$, which is an irrational number.
Exact value of $\zeta(3)$ is still unknown though it was proved by Ap\'ery in 1979 that $\zeta(3)$ was also irrational
(see \cite{Porten}).  Values of $\zeta(n)$, when $n$ is even, are known and can be written in terms of Bernoulli numbers.
We refer the interested reader to chapter 19 of \cite{BOOK} for a ``perfect" proof of the formula
$$
\zeta(2k) = \sum\limits_{n=1}^{\infty} \frac{1}{n^{2k}} = \frac{(-1)^{k-1} 2^{2k-1} B_{2k}}{(2k)!} \cdot \pi^{2k}~~~~(k\in\natu).
$$

\noindent Notice that $\zeta(n)$ can be written as the following
multi-variable integral
$$
\zeta(n) = \int_0^1\cdots  \int_0^1
\frac{1}{1-x_1x_2\cdots x_n}~dx_1dx_2\ldots dx_n.
$$
Indeed, each integral is improper at both ends and since the
geometric series $\sum\limits_{q\geq 0} x^{q}$ converges uniformly
on the interval $|x|\leq R,~\forall R\in(0,1)$ we can write
$$
\frac{1}{1-x_1 x_2\cdots x_n} = \sum\limits_{q=0}^{\infty}
(x_1x_2\cdots x_n)^q
$$
then interchange summation with integration, and then integrate $(x_1x_2\cdots x_n)^q$ for each $q$. Using the
identities
$$
\frac{1}{1-xy}+\frac{1}{1+xy}=\frac{2}{1-x^2y^2} ~~~ \text{and}~~~
\frac{1}{1-xy}-\frac{1}{1+xy}=\frac{2xy}{1-x^2y^2}
$$
and a simple change of variables one can easily see that
$$
\int_0^1{\int_0^1{\frac{1}{1-xy}~dx}dy} = \frac{4}{3}
\int_0^1\int_0^1\frac{1}{1-x^2y^2}~dxdy
$$
by further generalizing this idea one comes along the following,
$$
\zeta(n)=\frac{2^n}{2^n-1}\int_0^1\ldots\int_0^1\frac{1}{1-\prod\limits_{i=1}^n
x^2_i}~dx_1...dx_n.
$$

\noindent Notice that $(1,1)$ is the only point in the square
$[0,1]\times[0,1]$, which makes the integrand $1/(1-x^2y^2)$
singular. If we take another point on the graph of $1=x^2y^2$, say
$(a,1/a)$ with $a\in(0,\infty)$, then it follows easily (see lemma 1
below) that
$$
\int_0^{1/a}\int_0^a\frac{1}{1-x^2y^2}~dxdy =
\int_0^1\int_0^1\frac{1}{1-x^2y^2}~dxdy.
$$

\noindent This result motivates the following definition
\begin{dfn} For any point $(a_1,a_2,\ldots, a_{n-1})\in\real^{n-1}$ such that
$a_i\in(0,+\infty),~\forall i\in\{1,\ldots,n-1\}$ we define
$$
I_n(a_1,...,a_{n-1})=\int_0^{\frac{1}{a_1\cdots
a_{n-1}}}\ldots\int_0^{a_2}\int_0^{a_1}\frac{1}
{1-\prod\limits_{i=1}^n{x_i^2}}~dx_1dx_2\ldots dx_n.
$$
\end{dfn}

\begin{lemma}
For any $a_i\in(0,+\infty)$, we have $I_n(a_1,...,a_{n-1}) =
I_n(1,1,\ldots,1)$.
\end{lemma}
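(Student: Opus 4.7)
The plan is to reduce the claim to a routine diagonal change of variables that absorbs all of the $a_i$.

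Specifically, I would set
$$
y_i = \frac{x_i}{a_i}\quad (i=1,\ldots,n-1),\qquad y_n = a_1 a_2\cdots a_{n-1}\cdot x_n.
$$
Under this substitution the upper limit of $x_i$ (which is $a_i$ for $i<n$ and $1/(a_1\cdots a_{n-1})$ for $i=n$) is sent to $1$, so the new region of integration is exactly $[0,1]^n$. Moreover the integrand simplifies perfectly: the scalings in the first $n-1$ coordinates contribute a factor $(a_1\cdots a_{n-1})^2$ to $\prod x_i^2$, and the scaling in the last coordinate contributes its reciprocal, so $\prod_{i=1}^n x_i^2 = \prod_{i=1}^n y_i^2$. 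The Jacobian is the product of the diagonal scaling factors $a_1,\ldots,a_{n-1},1/(a_1\cdots a_{n-1})$, which also telescopes to $1$. Thus the transformed integral is literally $I_n(1,1,\ldots,1)$.

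The only genuine subtlety is that these are improper integrals: the integrand blows up along the hypersurface $\prod x_i^2=1$, which touches the corner $(a_1,\ldots,a_{n-1},1/(a_1\cdots a_{n-1}))$ of the domain. I would handle this by defining $I_n(a_1,\ldots,a_{n-1})$ as the limit of integrals over shrinking subregions, for example $[0,(1-\varepsilon)a_1]\times\cdots\times[0,(1-\varepsilon)/(a_1\cdots a_{n-1})]$ as $\varepsilon\to 0^+$. The same diagonal substitution maps each such subregion onto $[0,1-\varepsilon]^n$, where the integrand is bounded and continuous, so the change-of-variables formula applies verbatim. Taking $\varepsilon\to 0^+$ on both sides yields the equality.

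I do not expect any real obstacle; the argument is essentially a bookkeeping exercise in which the exponents on $a_1,\ldots,a_{n-1}$ cancel in three separate ways (in the limits of integration, in the integrand, and in the Jacobian). The only thing worth being careful about is making the improper integral interpretation explicit so that the change of variables is justified, as sketched above.
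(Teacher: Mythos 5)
Your proposal is correct and is essentially the paper's own proof: the authors use the same diagonal substitution (written as $x_i=a_iu_i$ with $a_n=1/(a_1\cdots a_{n-1})$, the inverse of your map), observing exactly as you do that the Jacobian telescopes to $1$ and the product $\prod x_i^2$ is unchanged. Your explicit truncation argument justifying the change of variables for the improper integral is a careful refinement that the paper leaves implicit, but it does not change the approach.
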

\begin{proof}
Simply observe that by using the change of variables $x_i=a_iu_i$
for all $i\in [1,\ldots,n]$, where $a_n=1/(a_1a_2\cdots a_{n-1})$,
the Jacobian equals 1, and the integrand is unchanged.
\end{proof}

In this article we investigate $\zeta(n)$ following Beukers, Calabi
and Kolk (see \cite{Calabi}), who used the change of variables
$$
x=\frac{\sin(u)}{\cos(v)}~~~\text{and}~~~y=\frac{\sin(v)}{\cos(u)}~~\text{to
evaluate} ~\int_0^1\int_0^1 \frac{1}{1-x^2y^2}~dxdy.
$$
Such proof of the identity $\zeta(2)=\pi^2/6$ may also be found in
chapter 6 of \cite{BOOK} and in papers of Elkies \cite{Elkies} and
Kalman \cite{Kalman}. Let us also mention here that Kalman's paper,
in addition to a few other proofs of the identity, contains some
history of the problem together with an extensive reference list.

Here we will be changing variables too, but in the integrals
$I_n(a_1,...,a_{n-1})$ and using the hyperbolic trig functions
$\sinh$ and $\cosh$ instead of $\sin$ and $\cos$. Such a change of
variables was considered independently of us by Silagadze and the
reader will find his results in \cite{Silagadze}.

\noindent {\bf Acknowledgement}: The authors would like to thank the
referee who drew our attention to Kalman's paper \cite{Kalman} and
has made a few useful suggestions that improved the exposition.

\section{Hyperbolic Change of Variables}

First observe that the change of variables
$$
x_i=\frac{\sin(u_i)}{\cos(u_{i+1})}~~~~~~\forall i\in\natu\mod(n)
$$
reduces the integrand in $I_n(1,...,1)$ to 1 only when $n$ is even. The
region of integration $\Phi_n=[(x_1,\ldots,x_n)\in\real^n: 0 <
x_1,\ldots,x_n < 1]$ becomes the one-to-one image of the
$n$-dimensional polytop (note $u_{n+1} = u_1$)
$$
\Pi_n := [(u_1,u_2,\ldots,u_n)\in\real^n: u_i > 0,
u_i+u_{i+1}<\frac{\pi}{2}, 1\leq i \le n].
$$

We suggest here a different change of variables that will
produce an integrand of 1 for all values of $n$ in
$I_n(a_1,...,a_{n-1})$. But first we define the corresponding
region.

\begin{dfn}
For any point $(a_1,a_2,\ldots, a_{n-1})\in\real^{n-1}$ such that
$a_i\in(0,+\infty),~\forall i\in\{1,\ldots,n-1\}$ we define
$$\Phi_n(a_1,a_2,\ldots,a_{n-1}):=
[(x_1,\ldots,x_n)\in\real^n~|~ 0 < x_i < a_i, \forall
i\in\{1,\ldots,n\}],
$$
where $a_n=1/(a_1\cdot a_2\cdot \ldots\cdot a_{n-1})$.
\end{dfn}

\begin{lemma} The change in variables
$$
x_i=\frac{\Sinh(u_i)}{\Cosh(u_{i+1})}  ~~~~~~~\forall i\in\natu\mod(n)
$$
reduces the integrand of $I_n(a_1,...,a_{n-1})$ to 1 for all values
of $n\geq 2$. It also gives a one-to-one differentiable map between the
region $\Phi_n(a_1,a_2,\ldots,a_{n-1})$ and the set
$\Gamma_n\subset\real^n$ described by the following $n$
inequalities:
$$
0<u_i < \ArcSinh\bigl(a_i\cdot\Cosh(u_{i+1})\bigr),~~~~~~~ \forall i\in\natu\mod(n).
$$
\end{lemma}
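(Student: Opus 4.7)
My plan has three parts: verify that the change of variables is a bijection onto the stated region, invert it explicitly, and then compute the Jacobian and check it cancels the $1/(1-\prod x_i^2)$ factor.

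For the first two parts, I would treat the inequalities in the definition of $\Gamma_n$ directly. Given $(u_1,\ldots,u_n)\in\Gamma_n$ with $u_i>0$, the map $x_i = \Sinh(u_i)/\Cosh(u_{i+1})$ clearly yields $x_i>0$; the inequality $u_i<\ArcSinh(a_i\Cosh(u_{i+1}))$ is equivalent to $\Sinh(u_i)<a_i\Cosh(u_{i+1})$, hence to $x_i<a_i$, so the image lies in $\Phi_n(a_1,\ldots,a_{n-1})$. To invert, I would set $c_i:=\Cosh(u_i)$ and use the relation $c_i^2=1+x_i^2 c_{i+1}^2$ iterated cyclically (indices mod $n$), which gives
\[
c_i^2\bigl(1-x_1^2\cdots x_n^2\bigr)=1+x_i^2+x_i^2 x_{i+1}^2+\cdots +x_i^2 x_{i+1}^2\cdots x_{i+n-2}^2.
\]
On $\Phi_n(a_1,\ldots,a_{n-1})$ we have $\prod x_i<\prod a_i=1$, so the denominator is positive and this formula determines each $c_i>1$ (hence each $u_i>0$) uniquely. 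One then checks $\Sinh(u_i)=\sqrt{c_i^2-1}=x_i c_{i+1}$, recovering the original equations, which gives the bijection together with smoothness in both directions.

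For the Jacobian, the matrix $\partial x_i/\partial u_j$ has only two nonzero entries per row: the diagonal $\Cosh(u_i)/\Cosh(u_{i+1})$ and the superdiagonal $-\Sinh(u_i)\Sinh(u_{i+1})/\Cosh^2(u_{i+1})$, with the last row wrapping around to column $1$. Expanding this "cyclic bidiagonal" determinant (e.g.\ along the first column) yields
\[
\det=\prod_{i=1}^n \frac{\Cosh(u_i)}{\Cosh(u_{i+1})}-(-1)^n\prod_{i=1}^n\frac{-\Sinh(u_i)\Sinh(u_{i+1})}{\Cosh^2(u_{i+1})}.
\]
The first product telescopes to $1$ and the second simplifies to $\prod \Sinh^2(u_i)/\prod\Cosh^2(u_i)=\prod x_i^2$, giving $\det=1-\prod x_i^2$. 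Multiplying by the integrand reduces it to $1$.

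The only genuinely delicate step is the Jacobian sign bookkeeping in the cyclic determinant: one has to track both the $(-1)^n$ from the wrap-around cofactor and the $(-1)^n$ from the product of $n$ negative off-diagonal entries, and see that they combine to give a net minus sign independent of $n$. Once that is handled, the rest is routine telescoping and the hyperbolic identity $\Cosh^2-\Sinh^2=1$, so I expect no further obstacles.
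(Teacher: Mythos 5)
Your proposal is correct and follows the paper's proof essentially step for step: the region correspondence via positivity of $\cosh$ and monotonicity of $\rm arcsinh$, the explicit inversion through the relation $\cosh^2(u_i)=1+x_i^2\cosh^2(u_{i+1})$, and the first-column expansion of the cyclic bidiagonal Jacobian matrix into an upper- and a lower-triangular piece, with the same net sign $(-1)^{n-1}$ on the wrap-around term yielding $\det = 1-\prod_{i=1}^n x_i^2$. If anything, your closed-form cyclic iteration $c_i^2\bigl(1-x_1^2\cdots x_n^2\bigr)=1+x_i^2+\cdots+x_i^2\cdots x_{i+n-2}^2$ carries out for general $n$ the inversion that the paper only records explicitly in the case $n=3$.
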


\begin{proof}
The inequalities for $\Gamma_n$ follow trivially from the
corresponding inequalities $0<x_i<a_i$ and the facts that
$\Cosh(x)>0$ and $\ArcSinh(x)$ is increasing everywhere. Injectivity
and smoothness of the map may be proven by writing down formulas,
which express each $u_i$ in terms of all $x_j$. For example, here
are the corresponding formulas for the set $\Gamma_3$:
$$
u_i = \ArcSinh\left( x_i\cdot\sqrt{\frac{1+x^2_{i+1} +
x^2_{i-1}x^2_{i+1}}{1-x^2_1x^2_2x^2_3}}\right),~~~i\in\natu\pmod{3}.
$$

The Jacobian is the determinant of the matrix
$$
A = \begin{pmatrix}
\frac{\cosh(u_1)}{\cosh(u_2)} & \frac{-\sinh(u_1)\sinh(u_2)}{\cosh^2(u_2)} & 0 & \ldots & 0 \\
0 & \frac{\cosh(u_2)}{\cosh(u_3)} & \frac{-\sinh(u_2)\sinh(u_3)}{\cosh^2(u_3)} & \ldots &0 \\
\vdots & \vdots & \vdots & \ddots & \vdots \\
\frac{-\sinh(u_n)\sinh(u_1)}{\cosh^2(u_1)}  & 0 & 0 & \ldots & \frac{\cosh(u_n)}{\cosh(u_1)}
\end{pmatrix}
$$

To compute this determinant we observe that the first column
expansion reduces the computation to two determinants of the upper
and lower triangular matrices. This results in the formula, where
the first term comes from the upper triangular matrix and the second
from the lower triangular matrix (recall that $u_{n+1} = u_1$) :

$$
{\rm Det}(A)=\prod\limits_{i=1}^n\frac{\cosh(u_i)}{\cosh(u_{i+1})} ~
+ ~ (-1)^{n-1}\cdot \prod\limits_{i=1}^n\frac{-\sinh(u_i)\sinh(u_{i+1})}{\cosh^2(u_{i+1})}=
1-\prod\limits_{i=1}^n\tanh^2(u_i).
$$

When using the above change in variables the denominator of the
integrand $1-\prod\limits_{i=1}^nx_i$ becomes
$1-\prod\limits_{i=1}^n\tanh^2(u_i)$, which we just proved to be the
Jacobian.
\end{proof}

\section{Computations of $\zeta(2)$}

We begin with $\zeta(2)$, which is a rational multiple of  $I_2(1)$. Lemma 1 implies that it's enough to compute
$$
I_2(a)=\int_0^{\frac{1}{a}}{\int_0^a{\frac{1}{1-x^2y^2}dx}dy} ~~~\mbox{for arbitrary}~a >0.
$$

\noindent We now preform the following change in variables
$$
x=\frac{\Sinh(u)}{\Cosh(v)}, ~ ~ y=\frac{\Sinh(v)}{\Cosh(u)}.
$$
As we proved above, our integrand reduces to 1 and all we must do is
worry about the limits. If $x= 0$ then clearly $u=0$, the same is
true for $y$ and $v$. If $x=a$ then $a\cdot\Cosh(v)=\Sinh(u)$ so
$v=\ArcCosh(\frac{\Sinh(u)}{a})$ and if $y=\frac{1}{a}$ then
$(1/a)\cdot\Cosh(u)=\Sinh(v)$ so $v=\ArcSinh(\frac{\Cosh(u)}{a})$
thus describing our region of integration (see Figure 1). We then
write the integral $I_2(a)$ as follows
$$
\int\limits_0^{\ArcSinh(a)}\ArcSinh\left(\frac{\Cosh(u)}{a}\right)du + \int\limits_{\ArcSinh(a)}^\infty\ArcSinh\left(\frac{\Cosh(u)}{a}\right)-
\ArcCosh\left(\frac{\Sinh(u)}{a}\right)du.
$$

\begin{figure*}[h]
\includegraphics[height=100mm,width=110mm]{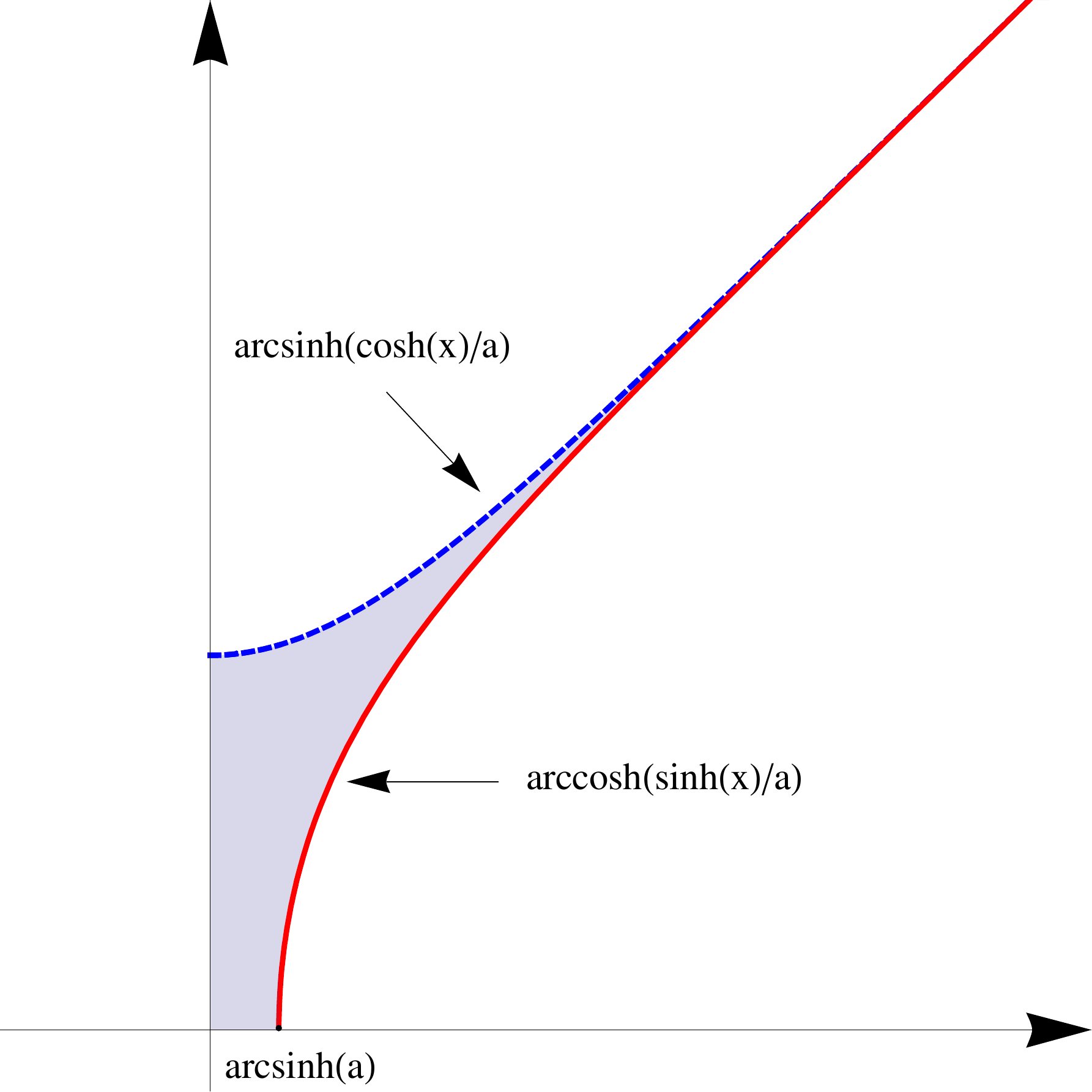}
\caption{The set $\Gamma_2\subset\real^2,~~\forall a> 0$.}
\end{figure*}

\begin{lemma}
$\lim\limits_{a\to 0} \int_0^{\ArcSinh(a)}\ArcSinh(\frac{\Cosh(u)}{a})du =0$
\end{lemma}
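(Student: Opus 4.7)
The plan is to use a squeeze argument: bound the integrand uniformly on the shrinking interval $[0,\ArcSinh(a)]$, multiply by its length, and show that the resulting product vanishes. The integrand $\ArcSinh(\Cosh(u)/a)$ is positive and, on the interval of integration, an increasing function of $u$ because $\Cosh$ is increasing on $[0,\infty)$ and $\ArcSinh$ is increasing on $\real$. So the integrand attains its maximum at the right endpoint $u=\ArcSinh(a)$, where $\Cosh(u)=\sqrt{1+a^2}$, giving the simple bound
$$
0\le \int_0^{\ArcSinh(a)}\ArcSinh\!\left(\frac{\Cosh(u)}{a}\right)du \;\le\; \ArcSinh(a)\cdot\ArcSinh\!\left(\frac{\sqrt{1+a^2}}{a}\right).
$$

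Next, I would use the logarithmic form $\ArcSinh(x)=\ln(x+\sqrt{x^2+1})$ to analyze the behavior of each factor as $a\to 0^+$. The first factor satisfies $\ArcSinh(a)\sim a$. For the second factor,
$$
\ArcSinh\!\left(\frac{\sqrt{1+a^2}}{a}\right)=\ln\!\left(\frac{\sqrt{1+a^2}+\sqrt{1+2a^2}}{a}\right),
$$
which behaves like $\ln(2/a)$ as $a\to 0^+$. Consequently the upper bound is asymptotically $a\ln(2/a)$, and this tends to $0$ since $a\ln(1/a)\to 0$.

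Combining these estimates via the squeeze theorem yields the claimed limit. There is no real obstacle here; the only mildly delicate point is making sure to justify the monotonicity of the integrand in $u$, which reduces the problem to elementary asymptotics of $\ArcSinh$ near $0$ and near $+\infty$.
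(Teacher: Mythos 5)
Your proposal is correct and follows essentially the same route as the paper: both bound the integral by the rectangle $\ArcSinh(a)\cdot\ArcSinh\bigl(\sqrt{1+a^2}/a\bigr)$ (note $\sqrt{1+a^2}/a=\sqrt{1/a^2+1}$, the paper's form) and then show this product vanishes as $a\to 0^+$. The only cosmetic differences are that you justify the rectangle bound by monotonicity of the integrand (arguably cleaner than the paper's appeal to concavity) and evaluate the limit via the logarithmic asymptotics $a\ln(2/a)\to 0$ rather than l'Hospital's rule.
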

\begin{proof}
If we let $\Cosh(\ArcSinh(z))=Q$ then $Q=\sqrt{1+z^2}$. Therefore
$$
\ArcSinh\left(\frac{\Cosh(\ArcSinh(a))}{a}\right)=\ArcSinh\left(\sqrt{\frac{1}{a^2}+1}\right).
$$
Since $\ArcSinh(\Cosh(u)/a)$ is concave up, we can take area of the rectangle with vertices at $(0,0)$,
$({\rm arcsinh}(a),0)$, and $({\rm arcsinh}(a),{\rm arcsinh(cosh(arcsinh}(a))/a)$ as an overestimate of the integral, that is
$$
\ArcSinh(a)\cdot \ArcSinh\left(\sqrt{\frac{1}{a^2}+1}\right)\geq
\int_0^{\ArcSinh(a)}\ArcSinh\left(\frac{\Cosh(u)}{a}\right)du\geq0
$$
Then by applying L'hospital's rule one can deduce
$$
\lim\limits_{a\to0}(\ArcSinh(a)\cdot\ArcSinh\left(\sqrt{\frac{1}{a^2}+1}\right))=0.
$$
\end{proof}

Now, since $I_2(a)=I_2(1)$, $\forall a >0$, we conclude that $I_2(1)=\lim\limits_{a\to 0} I_2(a)$, and therefore we have
$$
I_2(1)=\lim_{a \to 0}
\int_{\ArcSinh(a)}^\infty{\ArcSinh\left(\frac{\Cosh(u)}{a}\right)-\ArcCosh\left(\frac{\Sinh(u)}{a}\right)du}.
$$ Since
$$
\ArcSinh\left(\frac{\Cosh(x)}{a}\right)=\ln{\left(\frac{\Cosh(x)}{a}+\sqrt{\frac{\Cosh^2(x)}{a^2}+1}\right)}
$$
 and
$$
\ArcCosh\left(\frac{\Sinh(x)}{a}\right)=\ln{\left(\frac{\Sinh(x)}{a}+\sqrt{\frac{\Sinh^2(x)}{a^2}-1}\right)}
$$
we get
$$
I_2(1)=\lim_{a\to 0}\int_{\ArcSinh(a)}^\infty{\ln{\left(\frac{\frac{\Cosh(x)}{a}+\sqrt{\frac{\Cosh^2(x)}{a^2}+1}}{\frac{\Sinh(x)}{a}+
\sqrt{\frac{\Sinh^2(x)}{a^2}-1}}\right)}dx}
$$
which, after taking the limit as $a\to0$ gives
$$
I_2(1)=\int_0^\infty\ln\left(\frac{\Cosh(x)}{\Sinh(x)}\right)dx.
$$

\noindent Using integration by parts $u=\ln{\left(\frac{\Cosh(x)}{\Sinh(x)}\right)}$ and $v=dx$ one
obtains the formula
$$
I_2(1)=\left.
x\ln{\left(\frac{\Cosh(x)}{\Sinh(x)}\right)}\right|_0^\infty+\int_0^\infty{\frac{2x}{\Sinh(2x)}dx}.
$$
By examining the limits of the first half of the formula as x goes
to 0 and $\infty$ we are left with only the integral
$$
I_2(1)=\int_0^\infty\frac{2x}{\Sinh(2x)}dx.
$$
 By applying the change in variables $u = 2x$ our formula becomes
$$
I_2(1)=\frac{1}{2}\int_0^\infty{\frac{u}{\Sinh(u)}du}.
$$

Now we use the method of differentiation under the integral sign
and consider the function
$$
F(\alpha)=\frac{1}{2}\int_0^\infty\frac{\ArcTanh(\alpha
\Tanh(x))}{\Sinh(x)}dx.
$$ One should consider the function $F$ at the points $\alpha=1$ and $\alpha=0$. $F(1)$ is clearly the integral we
are trying to find and $F(0)$ is 0. Thus by differentiating under the integral with respect to alpha, plus some algebra
we obtain
$$
F'(\alpha) = f(\alpha)=\frac{1}{2}\int_0^\infty\frac{\Cosh(x)}{1+(1-\alpha^2)\Sinh^2(x)}dx.
$$
Then by preforming the change in variables
$u=\sqrt{1-\alpha^2}\cdot \Sinh(x)$ the integral becomes
$$
f(\alpha)=\frac{1}{2\sqrt{1-\alpha^2}}\int_0^\infty\frac{1}{1+u^2}du,
$$
which is simply
$$
\left.\frac{\ArcTanh(u)}{2\sqrt{1-\alpha^2}}\right|_0^\infty =
\frac{\pi}{4\sqrt{1-\alpha^2}}.
$$

Since we took the derivative with respect to $\alpha$ we must take the integral with respect to alpha so we have
$$
\int_0^1{f(\alpha)~d\alpha}=F(1)-F(0)=F(1)-0=F(1)
$$
which, as stated above is our goal. So
$$
I_2(1) = \int_0^1 f(\alpha)~d\alpha =\frac{\pi}{4}\int_0^1\frac{1}{\sqrt{1-\alpha^2}}~d\alpha =
\left.\frac{\pi}{4}\ArcSin(\alpha)\right|_0^1 = \frac{\pi^2}{8},
$$
and hence $\zeta(2) = \frac{4}{3}\cdot\frac{\pi^2}{8} = \pi^2/6$.

\section{A formula for $\zeta(n),~n\geq 2$}

One could try to use similar approach to compute $\zeta(n),~n > 2$, however the computations become a bit long.
Instead, we present an elementary proof of the following theorem, which generalizes our formula for $\zeta(2)$ from
the previous section.

\begin{theorem} Let $n\geq 2$ be a natural number. Then
$$
\int_0^1...\int_0^1\frac{1}{1-\prod\limits_{i=1}^n{x_i^2}}~dx_1...dx_n
= \frac{1}{(n-1)!}\cdot\int_0^\infty \ln^{n-1}(\Coth(x))~dx.
$$
\end{theorem}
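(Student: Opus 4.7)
The plan is to compute both sides of the claimed identity by reducing each to the series $\sum_{k=0}^\infty (2k+1)^{-n}$, rather than trying to generalize the limiting/Jacobian argument of Section 3 (which the authors note becomes unwieldy). This makes the proof essentially a series-manipulation and a single substitution.

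First I would handle the left-hand side. Since the geometric series $\sum_{k\geq 0}(x_1^2\cdots x_n^2)^k$ converges uniformly on compact subsets of $\Phi_n(1,\ldots,1)$, I can expand
\[
\frac{1}{1-\prod_{i=1}^n x_i^2} = \sum_{k=0}^\infty \prod_{i=1}^n x_i^{2k},
\]
interchange sum and integral (termwise positivity and monotone convergence), and use $\int_0^1 x^{2k}\,dx = 1/(2k+1)$. This gives
\[
\int_0^1\cdots\int_0^1 \frac{dx_1\cdots dx_n}{1-\prod_{i=1}^n x_i^2} = \sum_{k=0}^\infty \frac{1}{(2k+1)^n}.
\]

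Next I would attack the right-hand side by the substitution $y=\ln(\Coth(x))$, equivalent (up to algebra) to $t=e^{-2x}$. A short computation gives $\Sinh(x) = (e^{2y}-1)^{-1/2}$, $\Cosh(x) = e^y(e^{2y}-1)^{-1/2}$, and hence
\[
dx = -\frac{e^y}{e^{2y}-1}\,dy = -\frac{dy}{2\Sinh(y)}.
\]
Since $x\to 0^+$ gives $y\to\infty$ and $x\to\infty$ gives $y\to 0^+$, this transforms the integral into
\[
\int_0^\infty \ln^{n-1}(\Coth(x))\,dx = \frac{1}{2}\int_0^\infty \frac{y^{n-1}}{\Sinh(y)}\,dy.
\]

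Finally, I would evaluate this last integral via the geometric expansion
\[
\frac{1}{\Sinh(y)} = \frac{2e^{-y}}{1-e^{-2y}} = 2\sum_{k=0}^\infty e^{-(2k+1)y},
\]
valid for $y>0$. Swapping sum and integral (again justified by positivity/monotone convergence) and using the standard Gamma identity $\int_0^\infty y^{n-1}e^{-ay}\,dy = (n-1)!/a^n$ yields
\[
\frac{1}{(n-1)!}\cdot\frac{1}{2}\int_0^\infty \frac{y^{n-1}}{\Sinh(y)}\,dy = \sum_{k=0}^\infty \frac{1}{(2k+1)^n},
\]
which matches the left-hand side, completing the proof.

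The only genuinely delicate point — really the "main obstacle," though a minor one — is the bookkeeping of the substitution $y=\ln(\Coth(x))$: one must verify the sign of $dx/dy$, the reversal of the limits, and that the resulting expression collapses cleanly to $\frac{1}{2\Sinh(y)}$. Once that is in hand, the rest is termwise integration. No new ideas from Sections 2 or 3 are needed beyond the expansion of $1/(1-\prod x_i^2)$, so Lemmas 1 and 2 play no role in this particular proof.
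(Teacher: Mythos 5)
Your proof is correct, and it follows the same overall strategy as the paper: both sides are reduced to the series $\sum_{k\geq 0}(2k+1)^{-n}$, with the left-hand side handled by exactly the geometric-series expansion the paper uses. The only divergence is in how the one-variable integral reaches that series. The paper substitutes $z=\tanh(x)$, obtaining $\frac{1}{(n-1)!}\int_0^1 \frac{(-\ln z)^{n-1}}{1-z^2}\,dz$, expands $\frac{1}{1-z^2}$ geometrically, and invokes its Lemma 4, namely $\int_0^1 \ln^{k}(z)\,z^{2q}\,dz=\frac{(-1)^k k!}{(2q+1)^{k+1}}$. You instead substitute $y=\ln(\coth(x))$ to reach $\frac{1}{2}\int_0^\infty \frac{y^{n-1}}{\sinh(y)}\,dy$, expand $\frac{1}{\sinh(y)}=2\sum_{k\geq 0}e^{-(2k+1)y}$, and invoke the Gamma identity $\int_0^\infty y^{n-1}e^{-ay}\,dy=(n-1)!/a^n$. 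These are the same computation in different coordinates: your $y$ and the paper's $z$ are related by $z=e^{-y}$, and the paper's Lemma 4 is precisely the Gamma identity after the substitution $z=e^{-t}$; so neither route is more or less elementary. Yours does have the small aesthetic bonus that the intermediate form $\frac{1}{2}\int_0^\infty y^{n-1}/\sinh(y)\,dy$ visibly generalizes the formula $I_2(1)=\frac{1}{2}\int_0^\infty u/\sinh(u)\,du$ obtained in the paper's $\zeta(2)$ section. Your substitution bookkeeping (the sign of $dx/dy$, the reversal of limits, and $dx=-\,dy/(2\sinh(y))$) checks out, your observation that Lemmas 1 and 2 are not needed matches the paper's own proof, and your appeal to positivity and monotone convergence is a sound replacement for the paper's uniform-convergence justification of the sum--integral interchange.
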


\noindent Let us start with the following lemma, which can be easily
proved by using induction on $k$, integration by parts and
l'Hospital's rule.

\begin{lemma}
$$
\int_0^1{\ln^{k}(z)z^{2q}dz}=\frac{(-1)^kk!}{(2q+1)^{k+1}},~~\forall
k\in \natu ~\text{and} ~ q\geq 0.
$$
\end{lemma}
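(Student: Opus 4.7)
The plan is to prove the formula by induction on $k$, as suggested. I would take the base case $k=0$, where the formula reduces to the elementary identity $\int_0^1 z^{2q}\,dz = 1/(2q+1)$, matching the right-hand side since $0! = 1$ and $(-1)^0 = 1$.

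For the inductive step, assuming the formula holds for $k-1$, I would apply integration by parts to
$$
\int_0^1 \ln^k(z) z^{2q}\,dz
$$
with $u = \ln^k(z)$ and $dv = z^{2q}\,dz$, so that $du = k\ln^{k-1}(z)\cdot z^{-1}\,dz$ and $v = z^{2q+1}/(2q+1)$. This transforms the integral into
$$
\left.\frac{\ln^k(z) z^{2q+1}}{2q+1}\right|_0^1 \;-\; \frac{k}{2q+1}\int_0^1 \ln^{k-1}(z)z^{2q}\,dz.
$$
The upper boundary term vanishes because $\ln(1) = 0$, and the lower boundary term requires verifying that $\lim_{z\to 0^+} \ln^k(z)\,z^{2q+1} = 0$. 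This is precisely where l'Hospital's rule enters: rewriting the limit as $\ln^k(z)/z^{-(2q+1)}$ and applying l'Hospital repeatedly (or, more cleanly, noting that each differentiation of $\ln^k(z)$ reduces the power of $\ln$ by one while leaving a negative power of $z$ that is dominated by $z^{-(2q+1)}$) shows the limit is $0$ for any $q \geq 0$. Once this boundary issue is disposed of, the inductive hypothesis gives
$$
-\frac{k}{2q+1}\cdot\frac{(-1)^{k-1}(k-1)!}{(2q+1)^{k}} \;=\; \frac{(-1)^{k}k!}{(2q+1)^{k+1}},
$$
which is the desired expression.

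The main obstacle, and really the only nontrivial piece, is the boundary limit at $z=0$. Everything else is routine once one sets up the right integration-by-parts formula. I would therefore spend the bulk of the write-up on a clean l'Hospital argument (perhaps by induction on $k$ within the limit itself) and treat the algebraic simplification as a one-line computation.
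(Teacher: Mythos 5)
Your proposal is correct and follows exactly the route the paper itself indicates (the paper only sketches this lemma's proof as ``induction on $k$, integration by parts and l'Hospital's rule''): your base case, the parts computation with $u=\ln^k(z)$, $dv=z^{2q}\,dz$, the l'Hospital treatment of $\lim_{z\to 0^+}\ln^k(z)\,z^{2q+1}=0$, and the final algebra are all sound. You have in effect supplied the details the authors left to the reader, with no gaps.
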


\begin{proof}[Proof of the theorem.] Applying the substitution $z=\Tanh(x)$
to the integral
$$
\frac{1}{(n-1)!} \int_0^\infty \ln^{n-1}(\Coth(x))~dx
$$
gives
$$
\frac{1}{(n-1)!}\int_0^1{\frac{(-\ln(z))^{n-1}}{1-z^2}dz} =
\frac{1}{(n-1)!}\int_0^1 (-\ln(z))^{n-1}\cdot(\sum_{q\geq 0}
z^{2q})dz.
$$
Since the integral is improper at both ends and the geometric series $\sum\limits_{q\geq 0}
z^{2q}$ converges uniformly on the interval $|z|\leq R,~\forall
R\in(0,1)$, the last integral equals
$$
\frac{1}{(n-1)!}
\sum_{q\geq0}{(-1)^{n-1}\cdot \int_0^1{\ln^{n-1}(z)z^{2q}dz}} = ~ \mbox{by lemma 4} ~ = \sum_{q\geq0}{\frac{1}{(2q+1)^n}}.
$$

\noindent Using the geometric series expansion one can easily show that we also have
$$
\int_0^1\ldots\int_0^1
\frac{1}{1-\prod\limits_{i=1}^n{x_i^2}}~dx_1...dx_n =
\sum_{q\geq0}{\frac{1}{(2q+1)^n}}.
$$
\end{proof}

\begin{corollary}
For any integer $n\geq 2$,
$$
\zeta(n) = \frac{2^n}{(2^n -1)\cdot(n-1)!}\cdot\int_0^\infty \ln^{n-1}(\Coth(x))~dx.
$$
\end{corollary}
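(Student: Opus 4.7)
The plan is to show that both sides of the identity equal the same series, namely $\sum_{q\geq 0}\frac{1}{(2q+1)^n}$, and then conclude by transitivity. This is natural because the left-hand side is already tied to $\zeta(n)$ via the factor $2^n/(2^n-1)$ (as noted in the introduction), and the appearance of $(n-1)!$ together with a logarithmic power on the right strongly suggests using Lemma 4.

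For the left-hand side, I would expand the integrand as the geometric series
$$
\frac{1}{1-\prod_{i=1}^n x_i^2} = \sum_{q\geq 0}\prod_{i=1}^n x_i^{2q},
$$
justify swapping sum and integral by uniform convergence on $|x_i|\leq R$ for any $R\in(0,1)$ (exactly as sketched in the introduction for the analogous $\zeta(n)$ representation), and then evaluate $\int_0^1 x^{2q}\,dx = 1/(2q+1)$ in each coordinate. The result is $\sum_{q\geq 0} 1/(2q+1)^n$.

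For the right-hand side, I would first perform the substitution $z=\Tanh(x)$. Since $dz = (1-z^2)\,dx$ and $\Coth(x)=1/z$ so that $\ln(\Coth(x)) = -\ln(z)$, the integral transforms into
$$
\frac{1}{(n-1)!}\int_0^1 \frac{(-\ln z)^{n-1}}{1-z^2}\,dz.
$$
Then I would again expand $1/(1-z^2) = \sum_{q\geq 0} z^{2q}$, interchange sum and integral using the same uniform-convergence argument, and invoke Lemma 4 with $k=n-1$ to get $\int_0^1 \ln^{n-1}(z)z^{2q}\,dz = (-1)^{n-1}(n-1)!/(2q+1)^n$. The $(-1)^{n-1}$ cancels against the $(-\ln z)^{n-1}$ sign, the $(n-1)!$ cancels the prefactor, and one is left with $\sum_{q\geq 0} 1/(2q+1)^n$.

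The main obstacle, and the only step that is not completely formal, is justifying the interchange of summation and integration on the right-hand side, since the integrand is improper at both endpoints ($z=0$ contributes the $\ln^{n-1}$ singularity and $z=1$ contributes the $1/(1-z^2)$ singularity). This can be handled exactly as suggested in the introduction: restrict to $|z|\leq R<1$ where the geometric series converges uniformly, apply Fubini term by term, and then take $R\to 1^-$, using that the resulting series $\sum (n-1)!/(2q+1)^n$ converges absolutely for $n\geq 2$ and that the tail of the integral near $z=0$ is controlled by an elementary bound on $\int_0^{\varepsilon}(-\ln z)^{n-1}\,dz$.
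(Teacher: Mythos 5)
Your proposal is correct and takes essentially the same route as the paper: the theorem there is proved by substituting $z=\tanh(x)$, expanding $1/(1-z^2)$ as a geometric series, and applying Lemma 4 with $k=n-1$ to show that both the multiple integral and $\frac{1}{(n-1)!}\int_0^\infty \ln^{n-1}(\coth(x))\,dx$ equal $\sum_{q\geq 0}(2q+1)^{-n}$, after which the factor $\frac{2^n}{2^n-1}$ from the introduction gives the corollary. Your added care in justifying the interchange of sum and integral near $z=0$ and $z=1$ merely makes explicit what the paper leaves to the uniform-convergence remark on $|z|\leq R$.
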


\parskip=1mm

\noindent Siena College, Department of Mathematics\\
515 Loudon Road, Loudonville NY 12211

\noindent {\small jt17dava@siena.edu {\it and} nkrylov@siena.edu}

\end{document}